\date{}
\newtheorem{thm}{Theorem}[section]
\newtheorem{lem}[thm]{Lemma}
\newtheorem{cor}[thm]{Corollary}
\newtheorem{rem}[thm]{Remark}
\numberwithin{equation}{section}
\begin{document}
\setlength{\unitlength}{1cm}

%---------------------------------------------------------------

%---------------------------------------------------------------

\vskip1.5cm
% ---------------------TITLE----------------------------------

\centerline { \textbf{ Expansion Theorem for Sturm-Liouville problems
with }}
 \centerline { \textbf{transmission conditions  }}

\vskip.2cm

% ---------------------NAME---------------------------------

\vskip.8cm \centerline {\textbf{O. Sh. Mukhtarov$^\dag$ and K.
Aydemir$^\dag$ }}

\vskip.5cm
% Address 1 -------------------------------------------------------------------

\centerline {$^\dag$Department of Mathematics, Faculty of Science,}
\centerline {Gaziosmanpa\c{s}a University,
 60250 Tokat, Turkey}
\centerline {e-mail : {\tt omukhtarov@yahoo.com,
kadriye.aydemir@gop.edu.tr }}

% Address 2-----------------------------------------------------------------

 % Address 3-----------------------------------------------------------------

% ABSTRACT----------------------------------------------------------------

\vskip.5cm \hskip-.5cm{\small{\bf Abstract :} The purpose of this
paper is to extend some  spectral properties of regular
Sturm-Liouville problems to the special type discontinuous
boundary-value problem, which consists of a Sturm-Liouville equation
together with eigenparameter-dependent boundary conditions and two
supplementary transmission conditions. We construct  the resolvent
operator  and Green' s function and prove theorems about expansions
in terms of eigenfunctions in modified Hilbert space $L_{2}[a, b]$.
\vskip0.3cm\noindent {\bf Keywords :} \ Boundary-value problems,
transmission conditions, Resolvent operator, expansion theorem.
{\vskip0.3cm\noindent {\bf AMS subject classifications : 34L10,
34L15 }

\hrulefill

\section{\textbf{Introduction}}

With historical roots in the application of Fourier series to heat
flow, the Sturmian theory is one of the most extensively developing
fields in pure and applied mathematics. As is well-known  the
eigenvalue parameter takes part linearly only in the differential
equation in the classical Sturm-Liouville problems. However, in
mathematical physics are encountered such problems, where eigenvalue
parameter appear in both differential equation  and boundary
conditions. The first, we cite the works of Walter \cite{wa} and
Fulton \cite{fu} both of which have extensive bibliographies, in the
case of \cite{fu}, a discussion of physical applications.
Afterwards, we mention the results  \cite{bi1, bi2,
 ko, sh}  and corresponding references cited
therein. In recent years there has been increasing interest of some
Sturm-Liouville type problems which may have discontinuities in the
solution or its derivative at interior point (see \cite{be, ba, bo,
ch,  ka1, os1,  ka2, e.t, wang}).

In this paper we shall investigate some spectral properties of one
discontinuous Sturm-Liouville problem for which the eigenvalue
parameter takes part in both differential equation and boundary
conditions. Moreover, two supplementary transmission conditions at
one interior point are added to boundary conditions. Namely, we
consider the Sturm-Liouville equation,
\begin{equation}\label{1.1}
\tau u:=-u^{\prime \prime }(x)+ q(x)u(x)=\lambda u(x)
\end{equation}
to hold in finite interval $(a, b)$ except at one inner point $c \in
(a, b) $, where discontinuity in $u  \ \textrm{and} \ u'$   are
prescribed by transmission conditions
\begin{equation}\label{1.2}
\gamma_{1}u(c-0)-\delta_{1} u(c+0)=0,
\end{equation}
\begin{equation}\label{1.3}
\gamma_{2}u'(c-0)-\delta_{2} u('c+0)=0,
\end{equation}
together with the eigenparameter- dependent boundary conditions
\begin{equation}\label{1.4}
 \alpha_{1}u(a)+\alpha_{2}u'(a)=0,
\end{equation}
\begin{equation}\label{1.5}
(\beta'_{1}\lambda+\beta_{1})u(b)-(\beta'_{2}\lambda+\beta_{2})u'(b)=0,
\end{equation}
where the potential  $q(x)$ is real-valued, continuous in each
interval $[a, c) \  \textrm{and} \\ (c, b]$ and has a finite limits
$q( c\mp0)$ ; \ $\alpha_{i}, \ \beta_{i}, \ \beta'_{i}, \
\delta_{i}, \ \gamma_{i} \ (i=1,2)$ are real numbers; \ $\lambda$ \
is a complex eigenparameter. Naturally we exclude each of the
trivial conditions $\gamma_{1}=\delta_{1}=0, \
\gamma_{2}=\delta_{2}=0, \ \alpha_{1}= \alpha_{2}=0,  \
\beta'_{1}=\beta_{1}= \beta'_{2}=\beta_{2}=0 $. In contrast to
previous works, eigenfunctions of this problem may have
discontinuity at the one inner point of the considered interval.

This kind of problems are connected with discontinuous material
properties, such as heat and mass transfer, varied assortment of
physical transfer problems, vibrating string problems when the
string loaded additionally with point masses, and diffraction
problems \cite{ka2, tit}. The study of the structure of the solution
of the matching region leads to the consideration of an eigenvalue
problem for a second order differential operator with piecewise
continuous coefficients and transmission conditions at interior
points.  A. Boumenir  \cite{bo} use sampling techniques to
reconstruct the characteristic function associated with the
eigenvalues of two linked Sturm–Liouville operators by a
transmission condition. In \cite{wang}, Wang et al. studied a class
of Sturm-Liouville problems with eigenparameter-dependent boundary
conditions and transmission conditions at an interior point.  B.
Chanane \cite{ch} computed the eigenvalues of Sturm–Liouville
problems with several discontinuity conditions inside a finite
interval and parameter dependent boundary conditions using the
regularized sampling method. In \cite{ba} E. Bairamov and E.
U\u{g}urlu examined the determinants of dissipative Sturm-Liouville
operators with transmission conditions. J. Ao et al. \cite{be} have
considered the finite spectrum of Sturm–Liouville problems with
transmission conditions. Such properties as isomorphism,
coerciveness with respect to the spectral parameter, completeness of
root functions, distributions of eigenvalues of some discontinuous
boundary value problems with transmission conditions and its
applications to the corresponding initial-boundary value problems
for parabolic equations have been investigated in \cite{ka1, os1,
ka2} and \cite{tit}.
\section{The fundamental solutions and \\ Green' s function }
By following the procedure of \cite{os1} we can define four solution
$\phi_{1}(x,\lambda), \ \phi_{2}(x,\lambda), \\
\chi_{1}(x,\lambda)\ \textrm{and} \  \chi_{2}(x,\lambda)$ of the
equation (\ref{1.1}) under the initial conditions
\begin{equation}\label{3}
 \ \ u(a)=\alpha_{2} , \
\ u'(a)=-\alpha_{1},
\end{equation}
\begin{equation}\label{5}
 \ \ u(c+0)=\frac{\gamma_{1}}{\delta_{1}}\phi_{1}(c-0,\lambda) , \
\
u'(c+0)=\frac{\gamma_{2}}{\delta_{2}}\frac{\partial\phi_{1}(c-0,\lambda)}{\partial
x}
\end{equation}
\begin{equation}\label{4}
 \ \ u(b)=\beta'_{2}\lambda+\beta_{2} , \
\ u'(b)=\beta'_{1}\lambda+\beta_{1},
\end{equation}
and
\begin{equation}\label{6}
 \ \ u(c-0)=\frac{\delta_{1}}{\gamma_{1}}\chi_{2}(c+0,\lambda), \
\
u'(c-0)=\frac{\delta_{2}}{\gamma_{2}}\frac{\partial\chi_{2}(c+0,\lambda)}{\partial
x}
\end{equation}
respectively. Consequently, each of the functions
\begin{displaymath} \phi(x,\lambda)=\left
\{\begin{array}{ll}
\phi_{1}(x,\lambda)\  \textrm{for} \ x\in [a,c) \\
\phi_{2}(x,\lambda)\ \textrm{for} \ x\in (c,b] \\
\end{array}\right.
\ \chi(x,\lambda)=\left \{\begin{array}{ll}
\chi_{1}(x,\lambda) \ \textrm{for} \ x\in [a,c) \\
\chi_{2}(x,\lambda)\ \textrm{for} \ x\in (c,b]\\
\end{array}\right.
\end{displaymath}
satisfies the equation (\ref{1.1}) and the both transmission
conditions (\ref{1.2}) and (\ref{1.3}). Moreover, the solution
$\phi(x,\lambda)$ satisfies the first of boundary condition
(\ref{1.4}), but $\chi(x,\lambda)$  satisfies the other boundary
condition (\ref{1.5}). By applying the same method as in \cite{e.t}
we can prove that the solutions $\phi(x,\lambda) \ \textrm{and} \
\chi(x,\lambda)$ are entire functions of  complex parameter
$\lambda$ for each fixed $x \in [a,c)\cup (c,b]$.\\ It is known from
ordinary linear differential  equation  theory that each of the
Wronskians
$w_{1}(\lambda)=W(\phi_{1}(x,\lambda),\chi_{1}(x,\lambda)) \
\textrm{and}  \
w_{2}(\lambda)=W(\phi_{2}(x,\lambda),\chi_{2}(x,\lambda))$ are
independent of $x$ in $[a,c) \ \textrm{and} \ (c,b]$ respectively.
By using (\ref{5}) and (\ref{6}) we have
\begin{eqnarray} \label{7} w_{1}(\lambda) &=& \phi_{1}(c-0,\lambda)\frac{\partial
\chi_{1}(c-0,\lambda)}{\partial
x}-\chi_{1}(c-0,\lambda)\frac{\partial
\phi_{1}(c-0,\lambda)}{\partial x} \nonumber \\&=& \frac{\delta_{1}
\delta_{2}}{\gamma_{1}\gamma_{2}}(\phi_{2}(c+0,\lambda)
\frac{\partial \chi_{2}(c+0,\lambda)}{\partial
x}-\chi_{2}(c+0,\lambda)\frac{\partial
\phi_{2}(c+0,\lambda)}{\partial x})\nonumber
\\&=& \frac{\delta_{1}
\delta_{2}}{\gamma_{1}\gamma_{2}}w_{2}(\lambda)
\end{eqnarray}
Denote
 \begin{equation}\label{8}w(\lambda):=\gamma_{1}\gamma_{2} w_{1}(\lambda) = \delta_{1} \delta_{2} \
w_{2}(\lambda).\end{equation}

Again, similarly to \cite{os1} it can be proven that, there are
infinitely many eigenvalues $\lambda_{n}, \ n=1,2,...$   of the BVTP
$(\ref{1.1})-(\ref{1.5})$ which are coincide with
the zeros of characteristic function  \ $w(\lambda)$.\\
Let us consider the nonhomegeneous  differential equation
\begin{eqnarray}\label{2.5}
u''+(\lambda-q(x))u=F_{1}(x), \
\end{eqnarray}
on $[a,c)\cup(c,b]$ subject to nonhomogeneous boundary conditions
\begin{equation}\label{2.6}
 \alpha_{1}u(a)+\alpha_{2}u'(a)=0,
\end{equation}
\begin{equation}\label{2.7}
(\beta_{1}u(b)-\beta_{2}u'(b))+\lambda(\beta'_{1}u(b)-\beta'_{2}u'(b))=F_{2}
\end{equation}
and homogeneous transmission conditions
\begin{equation}\label{2.8}
\gamma_{1}u(c-0)-\delta_{1} u(c+0)=0, \ \
\end{equation}
\begin{equation}\label{2.89}
\gamma_{2}u'(c-0)-\delta_{2} u'(c+0)=0.
\end{equation}
and let $\lambda$ is not eigenvalue. Making use of the definitions
of the functions $ \phi_{i}, \chi_{i}$ (i = 1, 2) we find that the
general solution of the differential equation (\ref{2.5}) can be
written in the form
\begin{eqnarray}\label{2.10}
u(x,\lambda)=\left\{\begin{array}{c}
               \frac{\chi_{1}(x,\lambda)}{\omega_{1}(\lambda)}\int_{a}^{x}\phi_{1}(y,\lambda)F_{1}(y)dy +
\frac{\phi_{1}(x,\lambda)}{\omega_{1}(\lambda)}\int_{x}^{c}\chi_{1}(y,\lambda)F_{1}(y)dy \\ +c_{11}\phi_{1}(x,\lambda)+c_{12}\chi_{1}(x,\lambda) \ , \ \ \ \ \ \ \ \ for \  x \in (a,c) \\
                \\
               \frac{\chi_{2}(x,\lambda)}{\omega_{2}(\lambda)}\int_{c}^{x}\phi_{2}(y,\lambda)F_{1}(y)dy +
\frac{\phi_{2}(x,\lambda)}{\omega_{2}(\lambda)}\int_{x}^{b}\chi_{2}(y,\lambda)F_{1}(y)dy\\ +c_{21}\phi_{2}(x,\lambda)+c_{22}\chi_{2}(x,\lambda) \ , \ \ \ \ \ \ \ \ for \ x \in (c,b) \\
             \end{array}\right.
\end{eqnarray}
where $C_{ij} \ (i,j=1,2)$  are arbitrary constants. By substitution
into the boundary conditions (\ref{2.6}) and (\ref{2.7}) we see at
once that
\begin{eqnarray}\label{2.11}c_{12}=0, \
C_{21}=\frac{F_{2}}{\omega_{2}(\lambda)}.
\end{eqnarray}
Further, substitution (\ref{2.10}) into transmission conditions
(\ref{2.8}) and (\ref{2.89})we have the inhomogeneous linear system
of equations for $c_{11}$ \textrm{and} $c_{22}$  , the determinant
of which is equal to $- \omega(\lambda)$  therefore is not vanish by
assumption. Solving that system we find
\begin{eqnarray}\label{2.12}
c_{11}=\frac{1}{\omega_{2}(\lambda)}\int_{c}^{b}\chi_{2}(y,\lambda)F_{1}(y)dy+\frac{F_{2}}{\omega_{2}(\lambda)},
\end{eqnarray}
\begin{eqnarray}\label{2.13}
c_{22}=\frac{1}{\omega_{1}(\lambda)}\int_{a}^{c}\phi_{1}(y,\lambda)F_{1}(y)dy.
\end{eqnarray}
Putting (\ref{2.11}), (\ref{2.12}) and (\ref{2.13}) in (\ref{2.10})
we deduce that problem (\ref{2.5})-(\ref{2.89}) has an unique
solution,
\begin{eqnarray}\label{2.14}
u(x,\lambda)=\left\{\begin{array}{c}
               \frac{\chi_{1}(x,\lambda)}{\omega_{1}(\lambda)}\int_{a}^{x}\phi_{1}(y,\lambda)F_{1}(y)dy +
\frac{\phi_{1}(x,\lambda)}{\omega_{1}(\lambda)}(\int_{x}^{c}\chi_{1}(y,\lambda)F_{1}(y)dy \\ +\frac{\delta_{1}\delta_{2}}{\gamma_{1}\gamma_{2}}\int_{c}^{b}\chi_{2}(y,\lambda)F_{1}(y)dy+\frac{\delta_{1}\delta_{2}}{\gamma_{1}\gamma_{2}}F_{2}) \ \ \ \ \ \ \ \ \ for \  x \in (a,c) \\
                \\
\frac{\chi_{2}(x,\lambda)}{\omega_{2}(\lambda)}(\frac{\gamma_{1}\gamma_{2}}{\delta_{1}\delta_{2}}\int_{a}^{c}\phi_{1}(y,\lambda)F_{1}(y)dy+\int_{c}^{x}\phi_{2}(y,\lambda)F_{1}(y)dy)\\
+
\frac{\phi_{2}(x,\lambda)}{\omega_{2}(\lambda)}(\int_{x}^{b}\chi_{2}(y,\lambda)F_{1}(y)dy+F_{2})\  \ \ \ \ \ \ \ \ for \  x \in (c,b) \\
             \end{array}\right.
\end{eqnarray}
By defining the Green' s function as
\begin{eqnarray}\label{2.15}
G_{1}(x,y;\lambda)=\left\{\begin{array}{c}
 \frac{\phi(x,\lambda) \chi(y,\lambda)}{\omega(\lambda)} \ \ \ \ \textrm{for} \  a\leq y\leq x\leq b, \ \  x, y \neq c\\
                \\
  \frac{\phi(y,\lambda)\chi(x,\lambda)}{\omega(\lambda)} \ \ \ \ \textrm{for} \  a\leq x\leq y\leq b, \ \  x, y\neq c \\
             \end{array}\right.
\end{eqnarray}
the formula (\ref{2.14}) can be rewritten in the following form
\begin{eqnarray}\label{2.16}
u(x,\lambda)&=&\gamma_{1}\gamma_{2} \int_{a}^{c}
G_{1}(x,y;\lambda)F_{1}(y)dy+ \delta_{1}\delta_{2} \int_{c}^{b}
G_{1}(x,y;\lambda)F_{1}(y)dy \nonumber\\&+&
\delta_{1}\delta_{2}F_{2}\frac{\phi(x,\lambda)}{\omega(\lambda)}.
\end{eqnarray}
\section{Operator formulation in modified Hilbert space }
In this section we shall introduce a special  equivalent inner
product in the Hilbert space $L_{2}[a,b]\oplus \mathbb{C}$   and
define a symmetric operator A in this space such a way that the
considered problem can be interpreted as the eigenvalue problem of
this operator. For this we assume that,
$\rho:=\beta'_{1}\beta_{2}-\beta_{1}\beta'_{2}>0$ and for the sake
of shorting we restrict ourselves to the investigation
only the case $\gamma_{i}\neq 0, \delta_{i}\neq 0 (i=1,2)$ .\\
 In the Hilbert Space $H=L_{2}[a,b]\oplus \mathbb{C}$  of two-component vectors we define an equivalent inner product by
$$
<F,G>_{1}:=|\gamma_{1}\gamma_{2}| \ \int_{a}^{c}
F_{1}(x)\overline{G_{1}(x)}dx + |\delta_{1}\delta_{2}| \int_{c}^{b}
F_{1}(x)\overline{G_{1}(x)}dx +
\frac{|\delta_{1}\delta_{2}|}{\rho}F_{2}\overline{G_{2}}
$$
for $F=\left(
  \begin{array}{c}
   F_{1}(x),
    F_{2} \\
  \end{array}
\right)$,\quad $G=\left(
  \begin{array}{c}
    G_{1}(x),
  G_{2} \\
  \end{array}
\right)\in H$ and apply operator theory in the modified Hilbert
space $H_{1}=(L_{2}[a,b]\oplus\mathbb{C},<.,.>_{1} ).$ Below we
shall use the following notations:
$$(u)_{\beta}:= \beta_{1}u(b)-\beta_{2}u'(b),  \ \  (u)'_{\beta}:= \beta'_{1}u(b)-\beta'_{2}u'(b).$$
Let us define a linear operator $H:A\rightarrow A$   with the domain

\begin{eqnarray*}\label{2.3}
D(A):=& \bigg \{&F=\left(
  \begin{array}{c}
    F_{1}(x),
    (F_{1})'_{\beta} \\
  \end{array}
\right):F_{1}(x) \ \textrm{and} \  F'_{1}(x) \ \textrm{are absolutely} \nonumber \\
&{}&\textrm{continuous in each interval [a,c) \ \textrm{and} (c,b]},
\textrm{and has a finite limits}\nonumber \\
&{}&  F_{1}(c\mp0) \ \textrm{and} \ F'_{1}(c\mp0), \ \tau F_{1} \in
L_{2}[a,b], \  a_{1}u(a)+a_{2}u'(a)=0,\nonumber \\
&{}& \gamma _{1}F_{1}(c-0)=\delta_{1} F_{1}(c+0), \ \gamma
_{2}F'_{1}(c-0)=\delta_{2} F'_{1}(c+0)\bigg \}
\end{eqnarray*}
and action low\\

$A(F_{1}(x), (F_{1})'_{\beta})=(\tau F_{1}, (-F_{1})_{\beta}).$\\ \\
Consequently the problem $(\ref{1.1})-(\ref{1.5})$ can be written in
the operator form as $$AU=\lambda U, \ \ U:=(u(x), (u)'_{\beta})
 \in D(A)$$
in the Hilbert space $H_{1}$.

\begin{lem}\label{lem3.1}
The domain  $D(A)$ is dense in $H_{1}$.
\end{lem}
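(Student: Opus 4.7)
The plan is to establish density by showing that the orthogonal complement of $D(A)$ in $H_1$ reduces to zero. Suppose $F=(F_1(\cdot),F_2)\in H_1$ is orthogonal to $D(A)$, i.e.\ $\langle F,G\rangle_1=0$ for every $G\in D(A)$. I will split the argument into a bulk part (forcing $F_1=0$) and a boundary part (forcing $F_2=0$).

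First I would exploit the freedom inside each of the two open subintervals. For any $g\in C_{0}^{\infty}((a,c))$ the pair $G=(g,0)$ lies in $D(A)$: the boundary condition at $a$ and both transmission conditions at $c$ hold trivially because $g$ and $g'$ vanish near $a$ and $c$, and since $g$ vanishes near $b$ we have $(g)'_{\beta}=\beta_{1}'g(b)-\beta_{2}'g'(b)=0$. Orthogonality then gives
\[
|\gamma_{1}\gamma_{2}|\int_{a}^{c}F_{1}(x)\overline{g(x)}\,dx=0
\]
for all such $g$; since $C_{0}^{\infty}((a,c))$ is dense in $L_{2}(a,c)$ and $\gamma_{1}\gamma_{2}\neq 0$, this forces $F_{1}=0$ a.e.\ on $(a,c)$. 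The identical argument applied to $g\in C_{0}^{\infty}((c,b))$ shows $F_{1}=0$ a.e.\ on $(c,b)$, hence $F_{1}=0$ in $L_{2}[a,b]$.

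For the boundary component I would construct a single test element $G^{*}\in D(A)$ of the form $(\psi,k)$ with $k\neq 0$. Choose $\delta>0$ small and $\psi\in C^{\infty}[a,b]$ supported in $[b-\delta,b]$ with prescribed values of $\psi(b)$ and $\psi'(b)$ chosen so that $k:=\beta_{1}'\psi(b)-\beta_{2}'\psi'(b)\neq 0$. This is possible because the assumption $\rho=\beta_{1}'\beta_{2}-\beta_{1}\beta_{2}'>0$ forces $(\beta_{1}',\beta_{2}')\neq(0,0)$. Since $\psi$ vanishes near $a$ and near $c$, the boundary condition at $a$ and the transmission conditions at $c$ are automatic, and $\tau\psi\in L_{2}[a,b]$; thus $G^{*}\in D(A)$. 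Using $F_{1}=0$ the orthogonality condition collapses to
\[
\langle F,G^{*}\rangle_{1}=\frac{|\delta_{1}\delta_{2}|}{\rho}\,F_{2}\,\overline{k}=0,
\]
and since $k\neq 0$ we obtain $F_{2}=0$. Hence $F=0$ in $H_{1}$, which proves that $D(A)^{\perp}=\{0\}$ and therefore $\overline{D(A)}=H_{1}$.

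The only delicate point is the construction of $\psi$: it must carry a nonzero $(\cdot)'_{\beta}$-value while satisfying the transmission and left boundary conditions. I sidestep any compatibility issue by localizing $\psi$ near $b$, so the remaining conditions are trivially fulfilled and the construction reduces to choosing the jet $(\psi(b),\psi'(b))$ off the line $\beta_{1}'s-\beta_{2}'t=0$, which is always possible under $\rho>0$. Everything else is a direct application of the classical density of $C_{0}^{\infty}$ test functions in $L_{2}$ on each subinterval.
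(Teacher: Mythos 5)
Your argument is correct: testing against pairs $(g,0)$ with $g\in C_{0}^{\infty}((a,c))$ or $C_{0}^{\infty}((c,b))$ (which do lie in $D(A)$, since their $(\cdot)'_{\beta}$-value vanishes and all boundary/transmission conditions hold trivially) kills $F_{1}$, and the localized element $(\psi,(\psi)'_{\beta})$ with $(\psi)'_{\beta}\neq 0$ --- available because $\rho>0$ forces $(\beta_{1}',\beta_{2}')\neq(0,0)$ --- then kills $F_{2}$. The paper itself leaves the proof of this lemma empty, so there is nothing to compare against; your orthogonal-complement argument is the standard one used for such operator formulations (cf.\ the approach in the cited work of Mukhtarov, Kadakal and Muhtarov) and fills the gap adequately.
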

\begin{proof}
\end{proof}
\begin{thm}\label{2.1}
If \ $ \gamma_{1} \gamma_{2} \delta_{1} \delta_{2}>0$ then the
linear operator $A$  is symmetric.
\end{thm}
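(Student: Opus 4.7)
The plan is to directly verify $\langle AF,G\rangle_1=\langle F,AG\rangle_1$ for arbitrary $F=(F_1,(F_1)'_\beta)$, $G=(G_1,(G_1)'_\beta)\in D(A)$, by reducing the difference to a sum of boundary and transmission terms which all cancel thanks to the defining conditions of $D(A)$ and the sign assumption $\gamma_1\gamma_2\delta_1\delta_2>0$.

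First I would apply the classical Lagrange identity to $\tau u=-u''+qu$ on each of the subintervals $[a,c)$ and $(c,b]$, yielding
\[
\int_a^c(\tau F_1\overline{G_1}-F_1\overline{\tau G_1})\,dx=W(F_1,\overline{G_1})(c-0)-W(F_1,\overline{G_1})(a),
\]
and the analogous identity on $(c,b]$, where $W(u,v)(x)=u(x)v'(x)-u'(x)v(x)$. Weighting these two identities by $|\gamma_1\gamma_2|$ and $|\delta_1\delta_2|$ respectively and combining with the discrete contribution coming from the $\mathbb{C}$-component, the difference $\langle AF,G\rangle_1-\langle F,AG\rangle_1$ reduces to four boundary-type expressions: one at $x=a$, two at $x=c\mp 0$, and one at $x=b$ (the latter also entangled with the discrete piece).

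Next I would dispose of each term in turn. The term at $x=a$ vanishes because $F_1,G_1$ both satisfy $\alpha_1u(a)+\alpha_2u'(a)=0$ with real $\alpha_i$, forcing $W(F_1,\overline{G_1})(a)=0$. The terms at $c\mp0$ are linked by the transmission conditions $\gamma_1F_1(c-0)=\delta_1F_1(c+0)$ and $\gamma_2F_1'(c-0)=\delta_2F_1'(c+0)$ (same for $G_1$), which give
\[
W(F_1,\overline{G_1})(c-0)=\frac{\delta_1\delta_2}{\gamma_1\gamma_2}\,W(F_1,\overline{G_1})(c+0).
\]
Here is the only place where the hypothesis $\gamma_1\gamma_2\delta_1\delta_2>0$ is used: it guarantees $\gamma_1\gamma_2$ and $\delta_1\delta_2$ share the same sign, hence $|\gamma_1\gamma_2|\cdot\frac{\delta_1\delta_2}{\gamma_1\gamma_2}=|\delta_1\delta_2|$, so the $c-0$ and $c+0$ contributions cancel exactly. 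This sign bookkeeping at the interior point is the step that requires the most care and is the main (mild) obstacle.

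Finally, for the $x=b$ contribution I would expand $(F_1)_\beta,(F_1)'_\beta,(G_1)_\beta,(G_1)'_\beta$ using their definitions and a direct computation gives
\[
(F_1)'_\beta\overline{(G_1)_\beta}-(F_1)_\beta\overline{(G_1)'_\beta}=-(\beta'_1\beta_2-\beta_1\beta'_2)\,W(F_1,\overline{G_1})(b)=-\rho\,W(F_1,\overline{G_1})(b).
\]
Multiplying by $|\delta_1\delta_2|/\rho$ then exactly cancels the $|\delta_1\delta_2|W(F_1,\overline{G_1})(b)$ contribution coming from the $L_2$-integration by parts on $(c,b]$. Summing all cancellations yields $\langle AF,G\rangle_1=\langle F,AG\rangle_1$, proving symmetry. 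No density argument is needed beyond Lemma~\ref{lem3.1}, which is only relevant if one later wishes to promote symmetry to self-adjointness.
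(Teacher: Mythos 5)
Your proof is correct: the weighted Lagrange identity on $[a,c)$ and $(c,b]$, the vanishing of the Wronskian at $a$, the cancellation at $c\mp0$ via $|\gamma_1\gamma_2|\cdot\frac{\delta_1\delta_2}{\gamma_1\gamma_2}=|\delta_1\delta_2|$ (the only place the sign hypothesis enters), and the identity $(F_1)'_\beta\overline{(G_1)_\beta}-(F_1)_\beta\overline{(G_1)'_\beta}=-\rho\,W(F_1,\overline{G_1})(b)$ all check out and together give $\langle AF,G\rangle_1=\langle F,AG\rangle_1$ on $D(A)$. The paper leaves this proof blank, and your argument is exactly the standard one (as in the cited works of Mukhtarov et al.) that the authors evidently intend, so there is no divergence to report.
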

\begin{proof}

\end{proof}

\begin{rem}\label{rem1}
Having in view this property of the problem
$(\ref{1.1})-(\ref{1.5})$, we shall assume $ \gamma_{1} \gamma_{2}
\delta_{1} \delta_{2}>0$  everywhere in below. Also without loss of
generality we shall let $ \gamma_{1} \gamma_{2}>0$ \textrm{and} \ $
\delta_{1} \delta_{2}>0$ .
\end{rem}

\begin{rem}\label{rem2}By Lemma \ref{2.1} all eigenvalues of the problem $(\ref{1.1})-(\ref{1.5})$ are real.
Consequently, we can now assume that all eigenfunctions are
real-valued.
\end{rem}

\begin{cor}\label{cor2}Let $u(x)$ \textrm{and} $v(x)$   be eigenfunctions corresponding to distinct eigenvalues. Then
\begin{eqnarray}\label{2.3}
\gamma_{1}\gamma_{2} \ \int_{a}^{c} u(x)v(x)dx +
\delta_{1}\delta_{2} \int_{c}^{b} u(x)v(x)dx +
\frac{\delta_{1}\delta_{2}}{\rho}(u)'_{\beta}(v)'_{\beta}=0.
\end{eqnarray}
\end{cor}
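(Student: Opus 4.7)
The plan is to deduce the identity directly from the symmetry of the operator $A$ established in Theorem \ref{2.1}, using the standard fact that eigenvectors of a symmetric operator for distinct eigenvalues are orthogonal.

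First, I would package the given eigenfunctions into vectors of $H_1$ by setting $U:=(u(x),(u)'_\beta)$ and $V:=(v(x),(v)'_\beta)$, with corresponding eigenvalues $\lambda_u\neq\lambda_v$. Because $u$ and $v$ are eigenfunctions of the BVTP $(\ref{1.1})$--$(\ref{1.5})$, they satisfy the boundary condition $\alpha_1 u(a)+\alpha_2 u'(a)=0$ and the transmission conditions $(\ref{1.2})$--$(\ref{1.3})$, and $\tau u,\tau v\in L_2[a,b]$; hence $U,V\in D(A)$ and $AU=\lambda_u U$, $AV=\lambda_v V$. By Remark \ref{rem2} the eigenvalues are real and the eigenfunctions may be taken real-valued, so complex conjugates play no role in the computation that follows.

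Next, I would apply Theorem \ref{2.1} together with the sign assumptions $\gamma_1\gamma_2>0$ and $\delta_1\delta_2>0$ recorded in Remark \ref{rem1}. Symmetry yields $\langle AU,V\rangle_1=\langle U,AV\rangle_1$, which simplifies (using the reality of the eigenvalues) to
\begin{equation*}
(\lambda_u-\lambda_v)\langle U,V\rangle_1 = 0,
\end{equation*}
so $\langle U,V\rangle_1=0$. Writing out the inner product and dropping the absolute value signs (justified by the sign assumptions) produces exactly
\begin{equation*}
\gamma_1\gamma_2\int_a^c u(x)v(x)\,dx + \delta_1\delta_2\int_c^b u(x)v(x)\,dx + \frac{\delta_1\delta_2}{\rho}(u)'_\beta(v)'_\beta = 0,
\end{equation*}
which is the asserted orthogonality relation.

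There is no substantive obstacle here: all the real work has been deferred to the symmetry statement (Theorem \ref{2.1}), where the Green's-type identity generated by two integrations by parts on $[a,c)$ and $(c,b]$ together with the transmission conditions and the eigenparameter-dependent boundary condition at $b$ is carried out. The only point to be careful about in the present corollary is to verify that $U,V$ actually lie in $D(A)$ so that symmetry may be invoked, and to match the signs so that the $|\gamma_1\gamma_2|,|\delta_1\delta_2|$ appearing in the modified inner product collapse to the signed coefficients of the claimed identity.
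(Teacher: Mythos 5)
Your proof is correct and follows the same route as the paper, which simply invokes the orthogonality in $H_{1}$ of the eigenelements $(u(x),(u)'_{\beta})$ and $(v(x),(v)'_{\beta})$ of the symmetric operator $A$ and then unpacks the modified inner product. Your version merely spells out the standard orthogonality argument and the sign bookkeeping in more detail.
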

\begin{proof}
The proof is immediate from the fact that, the eigenelements\\
$(u(x), (u)_{\beta}^{'}) \ \textrm{and} \ (v(x), (v)_{\beta}^{'})$
of the symmetric linear operator $A$ is ortogonal  in the Hilbert
space $H_{1}.$
\end{proof}
\section{ The Resolvent operator and Self-adjointness of the problem  }
In this section we shall construct the  Resolvent operator and prove
self-adjointness of the problem.
\begin{lem}\label{bag}
Let $\lambda_{0}$  be zero of $w(\lambda)$. Then the solutions
$\phi(x,\lambda_{0}) \ \textrm{and} \ \chi(x,\lambda_{0})$ are
linearly dependent.
\end{lem}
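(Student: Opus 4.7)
The plan is to unfold the definition of $w(\lambda)$, reduce linear dependence to each of the two subintervals by a standard Wronskian argument, and then glue the two pieces together using the transmission conditions that both $\phi$ and $\chi$ are known to satisfy.

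First I would recall from (\ref{8}) that $w(\lambda) = \gamma_{1}\gamma_{2}\, w_{1}(\lambda) = \delta_{1}\delta_{2}\, w_{2}(\lambda)$, where $w_{i}(\lambda)=W(\phi_{i},\chi_{i})$ is constant in $x$ on the corresponding subinterval. Under the standing assumption $\gamma_{i},\delta_{i}\neq 0$, the hypothesis $w(\lambda_{0})=0$ forces both $w_{1}(\lambda_{0})=0$ and $w_{2}(\lambda_{0})=0$. Since $\phi_{1}(\cdot,\lambda_{0}),\chi_{1}(\cdot,\lambda_{0})$ are solutions of the same homogeneous linear second-order ODE on $[a,c)$ with vanishing Wronskian, standard ODE theory gives a scalar $k_{1}$ with $\chi_{1}(x,\lambda_{0})=k_{1}\,\phi_{1}(x,\lambda_{0})$ on $[a,c)$, and likewise $\chi_{2}(x,\lambda_{0})=k_{2}\,\phi_{2}(x,\lambda_{0})$ on $(c,b]$.

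The main step is to show $k_{1}=k_{2}$, which is exactly what is needed to conclude global linear dependence of $\phi(\cdot,\lambda_{0})$ and $\chi(\cdot,\lambda_{0})$. Both $\phi$ and $\chi$ satisfy the transmission conditions (\ref{1.2})--(\ref{1.3}). Using (\ref{5}) for $\phi$ and (\ref{6}) for $\chi$, I would compute
\[
\chi_{1}(c-0,\lambda_{0}) = \tfrac{\delta_{1}}{\gamma_{1}}\chi_{2}(c+0,\lambda_{0}) = \tfrac{\delta_{1}}{\gamma_{1}}\,k_{2}\,\phi_{2}(c+0,\lambda_{0}) = k_{2}\,\phi_{1}(c-0,\lambda_{0}),
\]
and compare with $\chi_{1}(c-0,\lambda_{0})=k_{1}\phi_{1}(c-0,\lambda_{0})$; an analogous identity at the level of derivatives gives $\chi_{1}'(c-0,\lambda_{0})=k_{2}\,\phi_{1}'(c-0,\lambda_{0})=k_{1}\,\phi_{1}'(c-0,\lambda_{0})$.

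The only potential obstacle is the degenerate case where $\phi_{1}(c-0,\lambda_{0})=0$, in which the first identity does not pin down $k_{1}-k_{2}$. This is handled by noting that $\phi_{1}(\cdot,\lambda_{0})$ is a nontrivial solution, so $\phi_{1}(c-0,\lambda_{0})$ and $\phi_{1}'(c-0,\lambda_{0})$ cannot vanish simultaneously; hence the derivative identity forces $k_{1}=k_{2}$. Setting $k:=k_{1}=k_{2}$ yields $\chi(x,\lambda_{0})=k\,\phi(x,\lambda_{0})$ on all of $[a,c)\cup(c,b]$, proving linear dependence.
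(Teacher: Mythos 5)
Your argument is correct and is the standard route for this lemma (the paper itself leaves the proof blank, in the spirit of deferring to \cite{os1}): since $\gamma_{i},\delta_{i}\neq 0$, the vanishing of $w(\lambda_{0})$ in (\ref{8}) forces $w_{1}(\lambda_{0})=w_{2}(\lambda_{0})=0$, hence $\chi_{i}=k_{i}\phi_{i}$ on each subinterval, and the initial data (\ref{5})--(\ref{6}) make the factors $\gamma_{i}/\delta_{i}$ cancel so that $(k_{1}-k_{2})\phi_{1}(c-0,\lambda_{0})=(k_{1}-k_{2})\frac{\partial\phi_{1}}{\partial x}(c-0,\lambda_{0})=0$, giving $k_{1}=k_{2}$ and global dependence. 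The only step you assert without justification is the nontriviality needed to write $\chi_{i}=k_{i}\phi_{i}$ and to run your final dichotomy: $\phi_{1}(\cdot,\lambda_{0})$ is nontrivial because the excluded case $\alpha_{1}=\alpha_{2}=0$ guarantees nonzero initial data at $a$, and $\phi_{2}(\cdot,\lambda_{0})$ is then nontrivial because its data at $c+0$ are nonzero multiples of $\bigl(\phi_{1}(c-0,\lambda_{0}),\frac{\partial\phi_{1}}{\partial x}(c-0,\lambda_{0})\bigr)\neq(0,0)$; with that sentence added the proof is complete.
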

\begin{proof}
\end{proof}
\begin{thm}\label{thm1}
Each eigenvalue of the problem (\ref{1.1})-(\ref{1.5}) is the simple
zero of \ $w(\lambda)$.
\end{thm}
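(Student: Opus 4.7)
The task is to show $w'(\lambda_0) \neq 0$; since $w$ is entire, this is equivalent to $\lambda_0$ being a simple zero. The plan is to derive a closed-form expression for $w'(\lambda_0)$ as a nonzero multiple of the $H_1$-norm squared of the corresponding eigenvector, thereby forcing it to be nonvanishing. The bridge between $w'(\lambda_0)$ and that norm is a Lagrange/Green identity applied to $\phi(\cdot,\lambda)$ and $\chi(\cdot,\lambda_0)$, after invoking Lemma \ref{bag} to record that $\chi(\cdot,\lambda_0)=k\,\phi(\cdot,\lambda_0)$ for some $k\neq 0$.

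Writing $\phi_0:=\phi(\cdot,\lambda_0)$, $\chi_0:=\chi(\cdot,\lambda_0)$, and noting that both $\phi(\cdot,\lambda)$ and $\chi_0$ satisfy $\tau u = \mu u$ (with $\mu=\lambda$ and $\mu=\lambda_0$ respectively) on each subinterval, a direct computation gives
\[
\frac{d}{dx} W[\phi(x,\lambda),\chi_0(x)] = (\lambda-\lambda_0)\,\phi(x,\lambda)\,\chi_0(x) \quad\text{on } (a,c)\cup(c,b).
\]
I then integrate over $[a,c-0]$ and $[c+0,b]$ and evaluate the six boundary Wronskians: at $a$ the condition (\ref{1.4}), satisfied by $\chi_0$ because it is a multiple of the eigenfunction $\phi_0$, yields $W[\phi_1,\chi_{1,0}](a)=0$; at $c\mp 0$ the transmission conditions (\ref{5}) and (\ref{6}) produce $\gamma_1\gamma_2 W[\phi_1,\chi_{1,0}](c-0)=\delta_1\delta_2 W[\phi_2,\chi_{2,0}](c+0)$, so the two integrated identities glue into one; and at $b$ the initial conditions (\ref{4}) for $\chi_{2,0}$ give
\[
\delta_1\delta_2 W[\phi_2,\chi_{2,0}](b) = \delta_1\delta_2\bigl(\lambda_0 (\phi(\cdot,\lambda))'_\beta + (\phi(\cdot,\lambda))_\beta\bigr),
\]
which, by the same computation performed with $\lambda$ in place of $\lambda_0$, differs from $w(\lambda) = \delta_1\delta_2\bigl(\lambda (\phi(\cdot,\lambda))'_\beta + (\phi(\cdot,\lambda))_\beta\bigr)$ by exactly $-\delta_1\delta_2(\lambda-\lambda_0)(\phi(\cdot,\lambda))'_\beta$.

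Combining these facts yields the key identity
\[
w(\lambda) = (\lambda-\lambda_0)\Big[\gamma_1\gamma_2\!\int_a^c\!\phi\chi_0\,dx + \delta_1\delta_2\!\int_c^b\!\phi\chi_0\,dx + \delta_1\delta_2(\phi(\cdot,\lambda))'_\beta\Big],
\]
and dividing by $\lambda-\lambda_0$, passing to the limit $\lambda\to\lambda_0$, and using $\chi_0=k\phi_0$ together with $(\chi_0)'_\beta=\beta'_1\beta_2-\beta_1\beta'_2=\rho$ (whence $k(\phi_0)'_\beta = \rho$) produces
\[
w'(\lambda_0) = k\Big[\gamma_1\gamma_2\!\int_a^c\!\phi_0^2\,dx + \delta_1\delta_2\!\int_c^b\!\phi_0^2\,dx + \frac{\delta_1\delta_2}{\rho}((\phi_0)'_\beta)^2\Big] = k\,\|U_0\|_1^2,
\]
where $U_0=(\phi_0,(\phi_0)'_\beta)\in D(A)$. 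Under the standing assumption $\gamma_1\gamma_2,\delta_1\delta_2,\rho>0$ (Remark \ref{rem1}), and since $\phi_0\not\equiv 0$ and $k\neq 0$, this is nonzero, so $\lambda_0$ is simple. The main technical obstacle is the careful bookkeeping of the factors $\gamma_i,\delta_i$ across the interior point $c$ via (\ref{8}), and the identification of the extra boundary term $\delta_1\delta_2(\phi(\cdot,\lambda))'_\beta$ at $b$ as precisely the piece that, after the limit, recombines with the integral into the $\lambda$-dependent summand $\frac{\delta_1\delta_2}{\rho}((\phi_0)'_\beta)^2$ present in the modified inner product on $H_1$.
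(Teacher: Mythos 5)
Your proof is correct and follows essentially the argument the paper intends: the Lagrange-identity computation yielding $w(\lambda)=(\lambda-\lambda_{0})\bigl[\gamma_{1}\gamma_{2}\int_{a}^{c}\phi\chi_{0}\,dx+\delta_{1}\delta_{2}\int_{c}^{b}\phi\chi_{0}\,dx+\delta_{1}\delta_{2}(\phi(\cdot,\lambda))'_{\beta}\bigr]$ and the limit $w'(\lambda_{0})=k\,\|\Phi_{0}\|_{1}^{2}\neq 0$ is exactly the relation the paper itself records as $\|\phi_{n}\|_{1}^{2}=\omega'(\lambda_{n})/k_{n}$ in (\ref{3.7}). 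The bookkeeping at $a$, at $c\mp 0$, and at $b$, and the use of $(\chi_{0})'_{\beta}=\rho$ with $k\neq 0$ from Lemma \ref{bag}, are all handled correctly under the standing assumptions $\gamma_{1}\gamma_{2}>0$, $\delta_{1}\delta_{2}>0$, $\rho>0$.
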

\begin{proof}
\end{proof}
Let $A$ be defined as above and let $\lambda$  not be an eigvalue of
this operator. For construction the resolvent operator $R(\lambda,
A):=(\lambda-A)^{-1}$ we shall solve the operator equation
\begin{eqnarray}\label{2.4}
(\lambda-A)U=F
\end{eqnarray}
for $F=(F_{1}(x),
  F_{2}) \in  H_{1}$. This operator equation equivalent to the problem
(\ref{2.5})-(\ref{2.89}).\\ Using the equalities  we see that
\begin{eqnarray}\label{2.17}
(G_{1}(x,.;\lambda))'_{\beta}=\frac{\phi(x,\lambda)}{\omega(\lambda)}(\chi(x,\lambda))'_{\beta}=\rho\frac{\phi(x,\lambda)}{\omega(\lambda)}.
\end{eqnarray}
Hence, \ the solution (\ref{2.14}) may be  written as
\begin{eqnarray}\label{2.18}
u(x,\lambda)&=&\gamma_{1}\gamma_{2} \int_{a}^{c}
G_{1}(x,y;\lambda)F_{1}(y)dy+ \delta_{1}\delta_{2} \int_{c}^{b}
G_{1}(x,y;\lambda)F_{1}(y)dy \nonumber\\&+&
\frac{\delta_{1}\delta_{2}}{\rho}(G_{1}(x,.;\lambda))'_{\beta}F_{2}
\end{eqnarray}
Consequently, for the solution
\begin{eqnarray}\label{2.19}
U(F,\lambda):=(u(x,\lambda), (u(.,\lambda))'_{\beta})
\end{eqnarray}
of the nonhomogeneous operator equation (\ref{2.4}) we obtain the
following formula
\begin{eqnarray}\label{2.20}
U(F,\lambda):=(<G_{x,\lambda},\overline{F}>_{1},
(<G_{x,\lambda},\overline{F}>_{1})'_{\beta})
\end{eqnarray}
where
\begin{eqnarray}\label{2.21}
G_{x,\lambda}:=(G_{1}({x,.;\lambda}),
(G_{1}({x,.;\lambda}))'_{\beta})
\end{eqnarray}
Now, making use (\ref{2.15}), (\ref{2.18}), (\ref{2.19}),
(\ref{2.20}) and (\ref{2.21}) we see that if $\lambda$  not an
eigenvalue of A then
\begin{eqnarray}\label{2.22}
U(F,\lambda)\in D(A) \ \textrm{for} \ \  F \in H_{1},
\end{eqnarray}
\begin{eqnarray}\label{2.23}
U((\lambda-A)F,\lambda)=F, \ \textrm{for} \in D(A)
\end{eqnarray}
and
\begin{equation}\label{2.24} \| U(F,\lambda) \| \leq
\ |Im\lambda|^{-1}\| F \| \ \textrm{for} \ F \in H_{1}, \ \ \ Im
\lambda \neq0 .
\end{equation}
Hence, each nonreal $\lambda\in \mathbb{C}$  is a regular point of
an operator A and
\begin{eqnarray}\label{2.25}
R(\lambda,A)F=(<G_{x,\lambda},\overline{F}>_{1},
(<G_{x,\lambda},\overline{F}>_{1})'_{\beta}) \ \textrm{for} \ \  F
\in H_{1}
\end{eqnarray}
Because of (\ref{2.22}) and (\ref{2.25})
\begin{eqnarray}\label{2.26}
(\lambda-A)D(A)=(\overline{\lambda}-A)D(A)=H_{1} \ \textrm{for} \
Im\lambda \neq0.
\end{eqnarray}
\begin{thm}\label{es}
The linear operator $A$ is self-adjoint.
\end{thm}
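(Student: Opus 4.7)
The plan is to reduce self-adjointness to the standard range criterion: a densely defined symmetric operator $A$ on a Hilbert space is self-adjoint if and only if the ranges of $\lambda-A$ and $\overline{\lambda}-A$ coincide with the whole space for some (equivalently, every) $\lambda$ with $\operatorname{Im}\lambda\neq 0$. Essentially all the analytic work has already been done earlier in the section; this proof is largely a bookkeeping step that packages it.

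First I would record the hypotheses already available: by Lemma~\ref{lem3.1} the domain $D(A)$ is dense in $H_{1}$, and by Theorem~\ref{2.1} the operator $A$ is symmetric, so $A\subseteq A^{*}$. It therefore remains only to show $D(A^{*})\subseteq D(A)$. Fix any $\lambda\in\mathbb{C}$ with $\operatorname{Im}\lambda\neq 0$; such a $\lambda$ cannot be an eigenvalue by Remark~\ref{rem2}, so the construction of the resolvent $R(\lambda,A)$ via the Green's function $G_{1}(x,y;\lambda)$ is valid.

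Next I would invoke equation~(\ref{2.26}), which states that $(\lambda-A)D(A)=(\overline{\lambda}-A)D(A)=H_{1}$. Let $G\in D(A^{*})$ and set $H:=A^{*}G$. Given an arbitrary $F\in H_{1}$, surjectivity of $\overline{\lambda}-A$ supplies a $U\in D(A)$ with $(\overline{\lambda}-A)U=F$. Using the definition of $A^{*}$ and the symmetry of $A$ I compute
\begin{equation*}
\langle F,G\rangle_{1}=\langle(\overline{\lambda}-A)U,G\rangle_{1}=\langle U,(\lambda-A^{*})G\rangle_{1}=\langle U,\lambda G-H\rangle_{1}.
\end{equation*}
On the other hand, since $\lambda-A$ is also surjective, pick $V\in D(A)$ with $(\lambda-A)V=\lambda G-H$; then by symmetry $\langle U,\lambda G-H\rangle_{1}=\langle U,(\lambda-A)V\rangle_{1}=\langle(\overline{\lambda}-A)U,V\rangle_{1}=\langle F,V\rangle_{1}$. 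Since $F\in H_{1}$ was arbitrary, this forces $G=V\in D(A)$ and $A^{*}G=AG$, giving $A^{*}\subseteq A$ and hence $A=A^{*}$.

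The only potentially delicate point is justifying that nothing in the above has been assumed beyond what the preceding results actually deliver: I would double-check that the resolvent formula (\ref{2.25}) combined with (\ref{2.22}) really gives surjectivity of both $\lambda-A$ and $\overline{\lambda}-A$ onto all of $H_{1}$ (not just onto a dense subspace), and that the bound (\ref{2.24}) implies $R(\lambda,A)$ is a bounded everywhere-defined inverse. With these checks, (\ref{2.26}) is firm and the deficiency indices of $A$ both vanish. No part of the argument requires a fresh computation; the only "hard" part is being careful with domain inclusions and the transmission-condition memberships when verifying that the candidate $U=R(\overline{\lambda},A)F$ indeed lies in $D(A)$, which was already ensured by the construction of $u(x,\lambda)$ in (\ref{2.14}) satisfying the transmission conditions (\ref{2.8})--(\ref{2.89}) and the boundary condition (\ref{2.6}).
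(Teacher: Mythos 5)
Your proposal is correct and follows essentially the same route as the paper: the paper's proof simply combines the symmetry of $A$ with the range equality (\ref{2.26}) and cites the standard criterion for self-adjointness of symmetric operators (Lang, Theorem 2.2, p.~198), which is exactly the criterion you invoke. The only difference is that you prove that standard criterion inline (the $D(A^{*})\subseteq D(A)$ argument via surjectivity of $\lambda-A$ and $\overline{\lambda}-A$), whereas the paper cites it.
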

\begin{proof}
 From the equality (\ref{2.26}) and the fact that $A$ is symmetric it
follows by the standard theorems for symmetric operators in Hilbert
spaces that $A$ is self-adjoint  in $H_{1}$ (see, for example,
\cite{la}, Theorem 2.2.p. 198). \end{proof}

\section{Expansion is series of eigenfunctions }

Let   $\lambda_{n}, \ n=1,2,...$ are eigenvalues of the operator A
and let $\phi_{n}(x):=\phi(x,\lambda_{n}), n=1,2,...$ be defined as
in section 2. By virtue of Lemma \ref{bag} the two-component vectors
\begin{eqnarray}\label{3.1}
\Phi_{n}:=(\phi(x,\lambda_{n}), (\phi(.,\lambda_{n}))'_{\beta}), \ \
n=0,1,2,...
\end{eqnarray}
are the eigenelements of $A$. Moreover,
\begin{eqnarray}\label{3.2}
<\Phi_{n},\Phi_{m}>_{1}=0 \ \ \ for \  n\neq m
\end{eqnarray}
since $A$ is self-adjoint in $H_{1}$. Denote the normalized
eigenelements by
\begin{eqnarray}\label{3.3}
\Psi_{n}:=(\psi_{n}(x), (\psi_{n}(x))'_{\beta}),
\end{eqnarray}
where
\begin{eqnarray}\label{3.4}
\psi_{n}(x):=\frac{\phi(x,\lambda_{n})}{ \| \Phi_{n} \|_{1} }.
\end{eqnarray} Let $k_{n}\neq0$   denote the real constant for which
\begin{eqnarray}\label{3.5}
\chi(x,\lambda_{n})=k_{n}\phi(x,\lambda_{n}), \ \ n=0,1,2,...   \
x\in(a,c)\cup(c,b).
\end{eqnarray} Then
 \begin{eqnarray}\label{3.6}
 (\phi_{n}(x))'_{\beta}=\frac{\rho}{k_{n}}.
\end{eqnarray}
Writing  for $\lambda_{n}$   instead of $\lambda_{0}$  we obtain
\begin{eqnarray}\label{3.7}
  \| \phi_{n} \|_{1}^{2}=\frac{\omega'(\lambda_{n})}{k_{n}}.
\end{eqnarray}
Now, making use the representation (\ref{2.18}) of the solution
$u(x,\lambda)$, the equalities (\ref{2.15}), (\ref{3.3})-
(\ref{3.5}) and the fact that each eigenvalue $\lambda_{n}$  is
simple zero of $\omega(\lambda)$  we derive that
\begin{eqnarray}\label{3.8}
Res_{\lambda=\lambda_{n}}u(x,\lambda)=<F,\Psi_{n}>_{1}\psi_{n}(x).
\end{eqnarray}
Consequently,
\begin{eqnarray}\label{3.9}
Res_{\lambda=\lambda_{n}}R(\lambda,A)F:=<F,\Psi_{n}>_{1}\Psi_{n}=C_{n}(F)\Psi,
\end{eqnarray}
where
\begin{eqnarray}\label{3.10}
C_{n}(F):=<F,\Psi_{n}>_{1}
\end{eqnarray}
are Fourier coefficients.
\begin{thm}\label{th3.1}(i) The modified Parseval equality
\begin{eqnarray}\label{a1}
\gamma_{1}\gamma_{2}  \ \int_{a}^{c} f^{2}(x)dx +
\delta_{1}\delta_{2} \int_{c}^{b}
f^{2}(x)dx&=&\sum_{n=0}^{\infty}\mid \gamma_{1}\gamma_{2}  \
\int_{a}^{c} f(x)\psi_{n}(x)dx \nonumber\\ &+& \delta_{1}\delta_{2}
\int_{c}^{b}f(x)\psi_{n}(x)dx\mid^{2}
\end{eqnarray}
is  hold for each $f \in L_{2}[a,c] \oplus L_{2}[c,b].$

\end{thm}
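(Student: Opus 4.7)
The plan is to derive the modified Parseval equality as an instance of the abstract Parseval identity in $H_1$, evaluated at two-component vectors of the form $F=(f,0)$, once it is established that the normalized eigenelements $\{\Psi_n\}$ form a complete orthonormal system in $H_1$.

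The crucial preliminary step is to show that the resolvent $R(\lambda,A)$ given by (\ref{2.25}) is compact on $H_1$ for some (hence every) regular value $\lambda$. The main contribution to $R(\lambda,A)F$ is integration of $F_1$ against the Green's kernel $G_1(x,y;\lambda)$ defined in (\ref{2.15}); since $\phi(\cdot,\lambda)$ and $\chi(\cdot,\lambda)$ are continuous on each subinterval $[a,c)$ and $(c,b]$ with finite one-sided limits at $c$, this kernel is bounded off the null set $\{x=c\}\cup\{y=c\}$ and therefore induces a Hilbert-Schmidt, hence compact, operator on the $L_2$-component; the $F_2$-contribution in (\ref{2.25}) is a rank-one perturbation and does not destroy compactness. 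Since $A$ is self-adjoint by Theorem \ref{es} and has compact resolvent, the spectral theorem yields an orthonormal basis of $H_1$ consisting of eigenelements of $A$. By Theorem \ref{thm1} every eigenvalue is simple, so up to normalization this basis is exactly $\{\Psi_n\}_{n=0}^{\infty}$ from (\ref{3.3}), and the abstract Parseval identity $\|F\|_1^2 = \sum_{n=0}^{\infty}|\langle F,\Psi_n\rangle_1|^2$ holds for every $F\in H_1$.

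The remainder is a direct substitution. I would specialize the identity to $F=(f(x),0)$ with $f\in L_2[a,c]\oplus L_2[c,b]$. Under the sign convention $\gamma_1\gamma_2>0$, $\delta_1\delta_2>0$ adopted in Remark \ref{rem1}, the absolute values in the definition of $\langle\cdot,\cdot\rangle_1$ become redundant; then $\|F\|_1^2$ collapses to the left-hand side of (\ref{a1}), while $\langle F,\Psi_n\rangle_1$ collapses to the expression inside the modulus on the right-hand side (the $(\psi_n)'_\beta$-term vanishes because the second component of $F$ is zero). This yields (\ref{a1}) exactly.

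The principal obstacle is verifying compactness of $R(\lambda,A)$: one must argue carefully near the transmission point $c$ to confirm that the jump conditions in (\ref{1.2})-(\ref{1.3}) do not spoil the Hilbert-Schmidt character of the piecewise Green's kernel. A subsidiary technical point is to confirm the absence of Jordan chains, which is automatic once self-adjointness and simplicity of eigenvalues (Theorem \ref{thm1}) are in hand. With these two ingredients secured, the Parseval identity follows without further analytic difficulty.
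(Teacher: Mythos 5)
Your proposal is sound and would prove Theorem \ref{th3.1}, but it takes a genuinely different route from the one the paper sets up (the paper's proof environment is in fact left empty, yet the surrounding machinery makes its intended method clear). You argue abstractly: the piecewise-continuous, bounded Green's kernel (\ref{2.15}) plus the rank-one $F_{2}$-term make $R(\lambda,A)$ Hilbert--Schmidt-plus-finite-rank, hence compact, so self-adjointness (Theorem \ref{es}) and the spectral theorem give an orthonormal basis of eigenelements $\{\Psi_{n}\}$ of $H_{1}$, and Parseval specialized to $F=(f,0)$ yields (\ref{a1}) directly, the boundary term dropping out because the second component of $F$ vanishes. The paper instead follows the classical Titchmarsh-type residue/contour-integration scheme: it has already computed $\mathrm{Res}_{\lambda=\lambda_{n}}R(\lambda,A)F=\langle F,\Psi_{n}\rangle_{1}\Psi_{n}$ in (\ref{3.8})--(\ref{3.10}) using the simplicity of the zeros of $\omega(\lambda)$ (Theorem \ref{thm1}) and the normalization identity (\ref{3.7}), and the expansion/Parseval statements are then obtained by integrating the resolvent over expanding contours and estimating it for large $|\lambda|$; that route requires asymptotics of $\phi$, $\chi$, $\omega$ but delivers the uniform-convergence refinements of Theorem \ref{thm3.2} as well, whereas your operator-theoretic argument is shorter and suffices for the $L_{2}$ identity (\ref{a1}) alone. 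One small correction: the identification of the spectral basis with $\{\Psi_{n}\}$ rests on the one-dimensionality of each eigenspace of $A$, which follows from the initial-value structure of $\phi$ (any eigenfunction satisfies (\ref{1.4}) and the transmission conditions, hence is a multiple of $\phi(\cdot,\lambda_{n})$; cf.\ Lemma \ref{bag}), not from Theorem \ref{thm1} about simple zeros of $\omega(\lambda)$; likewise ``absence of Jordan chains'' is automatic from self-adjointness and is not the issue. With that citation repaired, your argument is complete for part (i).
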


\begin{proof}
\end{proof}

\begin{thm}\label{thm3.2} Let $(f(x), (f)_{\beta}^{'}) \in
D(A)$. Then
\begin{eqnarray}\label{3.16} (i) \ \
 f(x)&=& \sum_{n=0}^{\infty}\big(\gamma_{1}\gamma_{2}  \
\int_{a}^{c} f(x)\psi_{n}(x)dx  + \delta_{1}\delta_{2}
\int_{c}^{b}f(x)\psi_{n}(x)dx\nonumber\\
&+&\frac{\delta_{1}\delta_{2}}{\rho}(f)_{\beta}^{'}
(\psi_{n})_{\beta}^{'} \big)\psi_{n}(x) \end{eqnarray} where, the
series converges absolutely and uniformly in whole $[a,c)\cup(c,b].$
(ii) The series (\ref{3.16}) may also be differentiated, the
differentiated series also being absolutely and uniformly convergent
in whole $[a,c)\cup(c,b].$
 \end{thm}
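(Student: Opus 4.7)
The plan is to prove both parts by combining two ingredients: a decay estimate for the Fourier coefficients $C_{n}(F)$, obtained from $F\in D(A)$, and a uniform pointwise bound on $\sum_{n}|\psi_{n}(x)|^{2}/\lambda_{n}^{2}$ extracted from the Green's function of Section 3.

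For the first ingredient, the self-adjointness of $A$ (Theorem \ref{es}) yields, for any $F=(f,(f)'_{\beta})\in D(A)$,
\[
C_{n}(AF)=\langle AF,\Psi_{n}\rangle_{1}=\langle F,A\Psi_{n}\rangle_{1}=\lambda_{n}C_{n}(F),
\]
so applying the Parseval equality of Theorem \ref{th3.1} to $AF$ gives $\sum_{n}\lambda_{n}^{2}|C_{n}(F)|^{2}=\|AF\|_{1}^{2}<\infty$. For the second ingredient, fix a nonreal $\lambda$ (which is not an eigenvalue by Remark \ref{rem2}); by (\ref{2.25}) applied to $F=\Psi_{m}$ one has $\langle G_{x,\lambda},\Psi_{m}\rangle_{1}=\psi_{m}(x)/(\lambda-\lambda_{m})$, so Parseval for $G_{x,\lambda}\in H_{1}$ yields
\[
\sum_{m=0}^{\infty}\frac{|\psi_{m}(x)|^{2}}{|\lambda-\lambda_{m}|^{2}}=\|G_{x,\lambda}\|_{1}^{2}.
\]
Since $|\lambda-\lambda_{m}|\ge|\lambda_{m}|/2$ for all sufficiently large $m$, and $\|G_{x,\lambda}\|_{1}$ depends continuously on $x$ on each of $[a,c)$ and $(c,b]$, the series $\sum_{n}|\psi_{n}(x)|^{2}/\lambda_{n}^{2}$ is finite and uniformly bounded on compact subsets of $[a,c)\cup(c,b]$.

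Part (i) then follows from a single Cauchy--Schwarz estimate:
\[
\sum_{n=N}^{M}|C_{n}(F)|\,|\psi_{n}(x)|\le\Bigl(\sum_{n=N}^{M}|C_{n}(AF)|^{2}\Bigr)^{1/2}\Bigl(\sum_{n=N}^{M}\frac{|\psi_{n}(x)|^{2}}{\lambda_{n}^{2}}\Bigr)^{1/2},
\]
whose first factor tends to $0$ while the second stays uniformly bounded. Hence the series $\sum_{n}C_{n}(F)\psi_{n}(x)$ converges absolutely and uniformly on compact subsets of $[a,c)\cup(c,b]$; its sum coincides with $f(x)$ because, by Theorem \ref{th3.1}, the same series already converges to $F$ in the norm of $H_{1}$. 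For part (ii) the identical argument applies once one has the analogous uniform bound $\sum_{n}|\psi_{n}'(x)|^{2}/\lambda_{n}^{2}<\infty$; I would obtain this by differentiating the Green's function in its first argument and observing that $(\partial_{x}G_{1})(x,\cdot;\lambda)$ still defines a vector in $H_{1}$ whose Fourier coefficients are $\psi_{n}'(x)/(\lambda-\lambda_{n})$, after which Parseval again supplies the bound.

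The main obstacle is the behaviour near the transmission point $c$: both $G_{x,\lambda}$ and its $x$-derivative are only piecewise continuous on $[a,c)\cup(c,b]$, so the uniform control must be organised separately on each side of $c$. This is precisely why the theorem asserts uniform convergence on $[a,c)\cup(c,b]$ rather than on the whole $[a,b]$, and why the continuity of $\|G_{x,\lambda}\|_{1}$ in $x$ has to be verified piece by piece rather than globally.
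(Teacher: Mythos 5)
The paper itself leaves the proof of Theorem \ref{thm3.2} blank, so there is no written argument to compare with; but the machinery assembled in Section 5 --- the simplicity of the zeros of $\omega(\lambda)$, the norm formula (\ref{3.7}) and the residue formulas (\ref{3.8})--(\ref{3.9}) for $R(\lambda,A)F$ --- points to a Titchmarsh/Fulton-style contour-integration proof, in which the expansion is obtained by integrating the resolvent over expanding contours and collecting residues at the eigenvalues. Your route is different and legitimate: a Bessel-type bound $\sum_n\lambda_n^2|C_n(F)|^2\le\|AF\|_1^2$ coming from $F\in D(A)$ and $C_n(AF)=\lambda_nC_n(F)$, a pointwise bound $\sum_m|\psi_m(x)|^2|\lambda-\lambda_m|^{-2}\le\|G_{x,\lambda}\|_1^2$ obtained from (\ref{2.25}) with $F=\Psi_m$, and Cauchy--Schwarz on the tails. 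This avoids all asymptotic analysis of $\omega(\lambda)$ that the contour method would need; what it does not deliver by itself is the identification of the limit, and that is where your write-up has real, though fixable, gaps.

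Concretely: (1) Theorem \ref{th3.1} as stated is Parseval only for vectors of the form $(f,0)$ --- the coefficients in (\ref{a1}) carry no boundary term --- so ``apply Parseval to $AF$'' is not literally available; Bessel's inequality for the orthonormal system $\{\Psi_n\}$ suffices for your first ingredient, and you should invoke that instead. (2) To conclude that the uniformly convergent series sums to $f(x)$ you need completeness of $\{\Psi_n\}$ in $H_1$, which again is not literally Theorem \ref{th3.1}; it follows only after noting that a vector orthogonal to every $\Psi_n$ and to every $(g,0)$ must be a multiple of $(0,1)$, which is impossible since $(\psi_n)'_\beta=\rho/(k_n\|\Phi_n\|_1)\neq0$ by (\ref{3.6}); then the $H_1$-limit and the uniform limit agree almost everywhere, hence everywhere on $[a,c)\cup(c,b]$ by continuity --- this last step should be stated. (3) You claim the bound on $\sum_n|\psi_n(x)|^2/\lambda_n^2$ only on compact subsets of $[a,c)\cup(c,b]$, but the theorem asserts uniformity on the whole set; since $\phi(\cdot,\lambda)$ and $\chi(\cdot,\lambda)$ have finite one-sided limits at $c$ and are continuous up to $a$ and $b$, in fact $\sup_x\|G_{x,\lambda}\|_1<\infty$ over all of $[a,c)\cup(c,b]$, so your method gives the full statement; your closing paragraph misdiagnoses this point. (4) In part (ii) uniform convergence of the differentiated series is not the whole claim: you must still conclude that its sum is $f'$, by the standard theorem on termwise differentiation (uniform convergence of the differentiated series plus convergence of the original series at each point). (5) The division by $\lambda_n$ needs $\lambda_n\neq0$ and $|\lambda_n|\to\infty$: a possible zero eigenvalue contributes only finitely many terms, and the growth of $|\lambda_n|$ follows because the eigenvalues are the real zeros, without finite accumulation point, of the entire function $\omega(\lambda)$. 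With these points repaired your argument is a complete and somewhat more elementary alternative to the residue-calculus proof the paper appears to intend.
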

 \begin{proof}
\end{proof}
 \section{Counterexample}
Recall that we had derived all results in this study under condition
$ \gamma_{1} \gamma_{2} \delta_{1} \delta_{2}>0$. Let us show that
this simple condition on the sign of the coefficients of the
transmission conditions can not be omitted without putting any other
condition on this coefficients. For this, consider the following
special case of the problem (\ref{1.1})-(\ref{1.5}) for which $
\gamma_{1} \gamma_{2} \delta_{1} \delta_{2}<0$ :
\begin{equation}\label{6.1}
\ -u''=\lambda u, \ \ \ \ \ x \in [-1,0)\cup (0,1]
\end{equation}
\begin{equation}
 \ \ u(-1)=0 \ \ , \
\ \lambda u(1)=u'(1)
\end{equation}
\begin{equation}
 \ \ u(0-0)=u(0+0) \ \ , \
\ \ u'(0-0)=-u'(0+0)
\end{equation}
It is easy to verify that this problem has only the trivial solution
\\$u=0 \  \textrm{for any } \lambda \in \mathbb{C}$. Thus, if $
\gamma_{1} \gamma_{2} \delta_{1} \delta_{2}<0$ then the spectrum of
the problem (\ref{1.1})-(\ref{1.5}) may be empty.

%-----------------------------------------------------------------

\begin{thebibliography}{99}
\bibitem{be}J. Ao, J. Sun and M. Zhang  {\em Matrix representations of Sturm–Liouville problems with
transmission conditions,} \ Comput. Appl. Math., 63 (2012)
1335{-}1348.



\bibitem{ba} E. Bairamov and E. U\u{g}urlu,
{\em The determinants of dissipative Sturm-Liouville operators with
transmission conditions,} Math. Comput. Modelling, Vol. 53, Nr. 5-6
(2011), 805-813.



\bibitem{bi1}P. A. Binding, P. J. Browne and B. A. Watson, {\em Sturm-Liouville problems
 with boundary conditions rationally dependent on the eigenparameter. I}.
Proceedings of the Edinburgh athematical Society, Series II, 45(3)(
2002), 631-645.


\bibitem{bi2}P. A. Binding, P. J. Browne and B. A. Watson, {\em Sturm-Liouville problems
with boundary conditions rationallay dependent on the eigenparameter
II}, J. Comput. Appl. Math. 148(2002), 147{-}169.

\bibitem{bo}A. Boumenir, {\em Sampling the miss-distance and transmission
function}, J. Math. Anal. Appl. 310(2005), 197{-}208.


\bibitem{ch}B. Chanane,
{\em  Sturm-Liouville problems with impulse effects,} \  Appl. Math.
Comput., 190/1(2007) pp. 610{-}626.

\bibitem{fu}C. T. Fulton, {\em Two-point boundary value problems with
eigenvalue parameter contained in the boundary conditions,} Proc.
Roy. Soc. Edin. 77A(1977), 293{-}308.





\bibitem{ka1}O. Sh. Mukhtarov and H. Demir,
{\em Coersiveness of the discontinuous initial- boundary value
problem for parabolic equations}, Israel J. Math., Vol. 114(1999),
239{-}252.



\bibitem{os1} O. Sh. Mukhtarov, M. Kadakal and F.S.  Muhtarov {\em On discontinuous
 Sturm-Liouville Problems with transmission conditions}. J. Math. Kyoto Univ., 2004,
Vol. 44, No. 4, 779-798.

\bibitem{ka2}O. Sh. Mukhtarov and  S. Yakubov,
{\em Problems for ordinary differential equations with transmission
conditions}, Appl. Anal., Vol 81(2002), 1033{-}1064.




\bibitem{ko}A. Schneider, {\em A note on eigenvalue problems with eigenvalue parameter in the boundary condition} Math. Z. 136, 163-167, 1974.




\bibitem{la}S. Lang, {\em Real Analysis (Second edition)} Addison-Wesley, Reading, Mass. 1983.


\bibitem{li}A. V. Likov and Y. A. Mikhailov, {\em  The theory of Heat and Mass
Transfer}, Qosenergaizdat,1963 (In Russian).




\bibitem{sh}  A. A. Shkalikov, {\em Boundary value problems for ordinary differential equations with a parameter
 in boundary condition}, Trudy Sem. Imeny I.G. Petrowsgo, 9, 190-229., 1983.


\bibitem{ti}E. C. Titchmarsh, {\em
Eigenfunctions Expansion Associated with Second Order Differential
Equations I}, second edn. Oxford Univ. Press, London, 1962.

\bibitem{tit}I. Titeux, Ya. Yakubov, {\em
Completeness of root functions for thermal condition in a strip with
piecewise continuous coefficients}, Math. Models Methods Appl. Sci.
7 (1997) 1035{-}1050.



\bibitem{e.t} E. Tun\c{c} and O.Sh. Muhtarov,
{\em Fundamental solutions and eigenvalues of one boundary-value
problem with transmission conditions}, Appl. Math. Comput.,
157(2004), 347-355.



\bibitem{wa}J. Walter, {\em Regular eigenvalue
problems with eigenvalue parameter in the boundary conditions},
Math. Z., 133(1973), 301{-}312.

\bibitem{wang}A. Wang, J. Sun, X. Hao and S. Yao, {\em Completeness of Eigenfunctions of Sturm-Liouville Problems
with Transmission Conditions }, Methods Appl. Anal. Vol.  3(2009),
299-312.





















%-----------------------------------------------------------------
\end{thebibliography}
\end{document}